\newcommand{\cL}{\mathcal{L}}
\newcommand{\Z}{\mathbb{Z}}
\newcommand{\mH}{\mathbb{H}}
\newcommand{\mC}{\mathbb{C}}
\newcommand{\mZ}{\mathbb{Z}}
\newcommand{\fR}{\mathfrak{R}}
\newcommand{\oR}{\mathfrak{R}_0}
\newcommand{\cR}{\hat{\mathfrak{R}}}
\newcommand{\coR}{\hat{\mathfrak{R}}_0}
\newcommand{\cA}{\mathcal{A}}
\newcommand{\cB}{\mathcal{B}}
\newcommand{\cC}{\mathcal{C}}
\newcommand{\cS}{\mathscr{S}}
\newcommand{\Kh}{\hat K}
\newcommand{\Db}{D^{\mathrm{b}}}
\newcommand{\Dm}{D^{-}}
\newcommand{\Dp}{D^{+}}
\newcommand{\Ddf}{D^\triangledown}
\newcommand{\Kpf}{K_{\mathrm{pf}}}
\newcommand{\Khpf}{\hat K_{\mathrm{pf}}}
\newcommand{\KH}{\mathbf{K}}
\DeclareMathOperator{\wt}{wt}
\DeclareMathOperator{\gmod}{gmod}
\newcommand{\mgmod}{\text{\textrm{-}}\mathrm{gmod}}
\newcommand{\simto}{\overset{\sim}{\to}}
\newcommand{\op}{\operatorname}
\newcommand{\END}{\op{End}}
\newcommand{\HOM}{\op{Hom}}
\DeclareMathOperator{\Ext}{Ext}
\DeclareMathOperator{\Irr}{Irr}
\DeclareMathOperator{\Proj}{Proj}
\DeclareMathOperator{\rad}{rad}
\numberwithin{equation}{section}
\newtheorem{thm}{Theorem}[section]
\newtheorem{lem}[thm]{Lemma}
\newtheorem{prop}[thm]{Proposition}
\theoremstyle{definition}
\newtheorem{defn}[thm]{Definition}
\newtheorem{ex}[thm]{Example}
\theoremstyle{remark}
\newtheorem{rmk}[thm]{Remark}
\title{Completions of Grothendieck Groups}
\author{Pramod N. Achar}
\author{Catharina Stroppel}
\thanks{The first author received support from NSF Grant No.~DMS-1001594.}
\begin{document}

\begin{abstract}
For a certain class of abelian categories, we show how to make sense of the ``Euler characteristic'' of an infinite projective resolution (or, more generally, certain chain complexes that are only bounded above), by passing to a suitable completion of the Grothendieck group.  We also show that right-exact functors (or their left-derived functors) induce continuous homomorphisms of these completed Grothendieck groups, and we discuss examples and applications coming from categorification.
\end{abstract}

\maketitle

\section{Introduction}
\label{sect:intro}

Let $\cA$ be a noetherian and artinian abelian category with enough projectives, and let $\Db(\cA)$ be its bounded derived category.  The inclusion $\cA \to \Db(\cA)$ gives rise to a natural isomorphism of Grothendieck groups
\begin{equation}\label{eqn:der-groth}
K(\cA) \simto K(\Db(\cA)).
\end{equation}
When $\cA$ has finite cohomological dimension, $K(\cA)$ captures a great deal of information about ``derived'' phenomena.  For instance, for any $X \in \cA$, we have
\begin{equation}\label{eqn:proj-sum}
[X] = \sum (-1)^i [P^i], \qquad
\text{where $P^\bullet \to X$ is a projective resolution.}
\end{equation}
If $\cB$ is another such category, then for any right-exact functor $F: \cA \to \cB$, the derived functor $\cL F$ induces a group homomorphism
\begin{equation}\label{eqn:der-homom}
[\cL F]: K(\cA) \to K(\cB).
\end{equation}
On the other hand, if $\cA$ has infinite projective dimension, we should replace $\Db(\cA)$ by $\Dm(\cA)$, the bounded above derived category,
but then its Grothendieck group cannot see anything at all, as it is zero by an ``Eilenberg swindle''-type argument; see~\cite{miy:gguc}.

However, when $\cA$ and $\cB$ are mixed categories with a Tate twist, a version of the statements \eqref{eqn:der-groth}, \eqref{eqn:proj-sum}, and~\eqref{eqn:der-homom} can be recovered.

We explain in this note how to replace $\Dm(\cA)$ by a certain subcategory $\Ddf(\cA)$ that is still large enough for derived functors, but small enough that a ``topological'' version of~\eqref{eqn:proj-sum} holds.  For these topological Grothendieck groups, certain infinite sums like~\eqref{eqn:proj-sum} converge, and derived functors give rise to \emph{continuous} homomorphisms.

More precisely, in this setting, the Grothendieck group $K(\cA)$ is naturally a module over the ring $\fR = \Z[q,q^{-1}]$.  It admits a completion $\Kh(\cA)$ that is a module over $\cR = \Z[[q]][q^{-1}]$.  The main results of the paper are summarized below.  (Further definitions and notation are given in Section~\ref{sect:notation}.)

\begin{thm}\label{thm:main}
Let $\cA$ be a noetherian and artinian mixed abelian category with enough projectives and a Tate twist.
\begin{enumerate}
\item The topological Grothendieck group $\KH(\Ddf(\cA))$ is a complete topological $\fR$-module.  Moreover, the natural map $K(\cA) \to \KH(\Ddf(\cA))$ is injective and induces an isomorphism\label{it:compl-isom}
\[
\Kh(\cA) \simto \KH(\Ddf(\cA)).
\]
\item Every object $X \in \Ddf(\cA)$ admits a projective resolution $P^\bullet$ with asymptotically decreasing weights.  In $\Kh(\cA)$, we have convergent series\label{it:conv-series}
\[
[X] = \sum_{i \in \Z} (-1)^i [H^i(X)] = \sum_{i \in \Z} (-1)^i [P^i].
\]
\item Both $\Irr(\cA_0)$ and $\Proj(\cA)_0$ span dense free $\cR$-submodules of $\Kh(\cA)$.  If those sets are finite, they each give an $\cR$-basis for $\Kh(\cA)$.\label{it:basis}
\item Let $\cB$ be another finite-length mixed category with a Tate twist, and let $F: \cA \to \cB$ be a right-exact functor that commutes with the resprective Tate twist.  If $F$ has finite weight amplitude, then $\cL F$ induces a continuous homomorphism of $\cR$-modules\label{it:der-functor}
\[
[\cL F]: \Kh(\cA) \to \Kh(\cB).
\]
\end{enumerate}
\end{thm}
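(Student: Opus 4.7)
The plan is to use the weight filtration on a mixed abelian category to define a natural topology on $\KH(\Ddf(\cA))$, then address the four parts in order, with the finiteness of weight amplitude doing all the heavy lifting for convergence and continuity. First I would fix conventions so that the Tate twist $(1)$ acts on $K(\cA)$ as multiplication by $q$, making $K(\cA)$ an $\fR$-module. The topology on $\KH(\Ddf(\cA))$ should be defined by declaring a neighborhood basis of zero to consist of the subgroups generated by classes of complexes whose cohomology vanishes in all weights above some threshold $-N$ (as $N \to \infty$). This is a linear topology by construction; completeness amounts to checking that a Cauchy sequence can be realized by choosing representative complexes with compatibly increasing weight ranges and gluing them together.

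For part (\ref{it:compl-isom}), the natural map $K(\cA) \to \KH(\Ddf(\cA))$ is injective because the weight filtration of a nonzero object in $\cA$ detects it. To identify the extended map with an isomorphism onto $\KH(\Ddf(\cA))$, note that any $X \in \Ddf(\cA)$ can be approximated by weight-truncations $\tau^w_{\ge -N} X$, which lie in $\Db(\cA)$ and hence contribute classes coming from $K(\cA)$; the weight topology on the target matches the $q$-adic completion of the $\fR$-module $K(\cA)$, which is by definition $\Kh(\cA)$. For part (\ref{it:conv-series}), I would build the projective resolution $P^\bullet \to X$ inductively, applying the projective-cover hypothesis to the top weight part of the current syzygy, so that the weights of the successive $P^i$ are asymptotically decreasing. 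Because of this, the partial sums $\sum_{i=0}^N (-1)^i [P^i]$ differ from $[X]$ by a class supported in weights $\le -f(N)$ with $f(N) \to \infty$, so they converge to $[X]$ in $\Kh(\cA)$; the same kind of argument for the weight-truncation (or cohomology) filtration gives the $\sum (-1)^i [H^i(X)]$ formula. Part (\ref{it:basis}) is then essentially formal: $\Irr(\cA_0)$ and $\Proj(\cA)_0$ each generate $K(\cA)$ as a free $\fR$-module (the simples by composition series, the projectives by the determinant of the Cartan matrix being a unit of $\fR$), and completion turns ``free generating set'' into ``dense free $\cR$-submodule'', specializing to an $\cR$-basis when these sets are finite.

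The main obstacle is part (\ref{it:der-functor}). Given a right-exact $F \colon \cA \to \cB$ commuting with the Tate twist and of finite weight amplitude, I would define $[\cL F]([X])$ by choosing a projective resolution $P^\bullet \to X$ as in part (\ref{it:conv-series}) and setting
\[
[\cL F]([X]) \;=\; \sum_{i \in \Z} (-1)^i [F(P^i)].
\]
The finite weight amplitude hypothesis is used in two ways: first, it guarantees that the weights of the $F(P^i)$ remain asymptotically decreasing, so the series converges in $\Kh(\cB)$ and $\cL F(X)$ lies in $\Ddf(\cB)$; second, it implies that a class supported in weights $\le -N$ maps to a class supported in weights $\le -N + C$ for some constant $C$ depending only on $F$, giving continuity. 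Independence from the choice of resolution and $\fR$-linearity are standard via the usual double-complex comparison between two projective resolutions, but the subtlety is that this comparison must itself be performed with weight-controlled projectives so that the comparison complex again lies in $\Ddf$; this is where I expect the most delicate bookkeeping, combining the weight-controlled resolution construction of part (\ref{it:conv-series}) with the finite weight amplitude of $F$ to show that all the auxiliary complexes converge and that the difference in $\Kh(\cB)$ is zero.
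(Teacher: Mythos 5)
Your overall architecture matches the paper's (weight-filtration topology, completeness via gluing, weight-controlled resolutions for convergence, finite amplitude for continuity), but there are two places where your route diverges or breaks.

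\textbf{Part~(\ref{it:basis}): a genuine error.} You assert that $\Proj(\cA)_0$ generates $K(\cA)$ as a free $\fR$-module ``by the determinant of the Cartan matrix being a unit of $\fR$.'' This is false in general, and in fact contradicts the purpose of the theorem: the Cartan determinant typically lies in $\fR = \Z[q,q^{-1}]$ but is a unit only after passing to $\cR = \Z[[q]][q^{-1}]$. The paper's own first example makes this vivid: for $\mH = \mC[x]/(x^2)$ the Cartan ``matrix'' is $1+q^2$, which is not invertible in $\fR$. So $\Proj(\cA)_0$ does \emph{not} span a free $\fR$-module equal to $K(\cA)$; the paper explicitly flags that $\Proj(\cA)_0$ fails to give an $\oR$-basis of $K(\cA_{\le 0})$. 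What one can show (and what the paper shows) is: (i) the classes $[P]$ for $P \in \Proj(\cA)_0$ are linearly independent over $\fR$, by a triangularity argument modulo $q\cdot K(\cA_{\le 0})$ (each $[P]$ equals its simple top plus lower-weight terms); (ii) the resulting free submodule is \emph{dense} in $\Kh(\cA)$, which is exactly the content of part~(\ref{it:conv-series}) — every class in $\Ddf(\cA)$ is a convergent series in projective classes. Density, not surjectivity over $\fR$, is the mechanism here, and it is precisely the completion that upgrades a non-basis to a topological basis.

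\textbf{Part~(\ref{it:der-functor}): a more complicated route than necessary.} You propose to \emph{define} $[\cL F]([X])$ by choosing a weight-controlled projective resolution and summing $(-1)^i[F(P^i)]$, and you correctly flag that you then owe a weight-controlled comparison argument for independence of the resolution. The paper avoids this entirely: $\cL F \colon \Dm(\cA) \to \Dm(\cB)$ already exists as a triangulated functor, so it automatically induces a homomorphism $K(\Ddf(\cA)) \to K(\Ddf(\cB))$ once you verify $\cL F(\Ddf(\cA)) \subset \Ddf(\cB)$, which is exactly Lemma~\ref{lem:cont}. No resolution-dependence issue ever arises. Then the single inclusion $\cL F(\Ddf_{\le n}(\cA)) \subset \Ddf_{\le n+\alpha}(\cB)$ does double duty: it shows $\cL F$ sends the ideal $I(\Ddf(\cA))$ into $I(\Ddf(\cB))$ (so the map descends to $\KH$), and it is literally the statement of continuity. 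Your observation about weight control of $F(P^i)$ is the correct proof of that inclusion — but you should use it to establish the inclusion of functors, not to hand-build the map on $K$-groups. The bookkeeping you were worried about then disappears.

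Parts~(\ref{it:compl-isom}) and~(\ref{it:conv-series}) are sketchy but follow the paper's line of argument (truncation by weight, constructing a limit complex, degree decay in minimal projective resolutions forced by the vanishing $\Ext^i(L,L')=0$ for $\wt(L) < \wt(L')+i$); they would go through once fleshed out.
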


The idea of completing Grothendieck groups arises from the concept of {\it categorification}, see e.g. \cite{KMS, Maz:lec}. There, $\mZ[q,q^{-1}]$-modules get realized as Grothendieck groups $K(\cA)$ of appropriately chosen graded categories $\cA$. The action of $q$ and $q^{-1}$ arises from shifting the grading (up and down). Often the  $\mZ[q,q^{-1}]$-modules in question come along with standard and canonical bases which then correspond to distinguished bases of $K(\cA)$. So far, representation theorists focused on the cases where the entries of the transformation matrices between the different bases were elements of $\mZ[q,q^{-1}]$. These numbers are then usually interpreted as Jordan--H\"older multiplicities or graded decomposition numbers. However, the theory of Lusztig's canonical bases or Kashiwara's crystal bases gives plenty of examples where the entries of the transformation matrix are contained only in the completion $\mZ[[q]][q^{-1}]$ of $\mZ[q,q^{-1}]$. Theorem \ref{thm:main} provides a possible categorical setup to handle such situations and could be viewed as the abstract context for categorifications of, for instance, modules for quantum groups. Although this  paper focuses on the abstract setup, there are already concrete examples known, for instance
in the context of categorification of Reshetikhin--Turaev--Viro invariants of links and $3$-manifolds; see~\cite{FSS}.

Following some set-up in Section~\ref{sect:notation}, the main theorem will be proved in Sections~\ref{sect:groth} and~\ref{sect:projres}.  Some examples and applications are indicated in Section~\ref{sect:example}.

\subsubsection*{Acknowledgements}
The authors are grateful to Sabin~Cautis for pointing out a flaw in an earlier version of this paper, and to Olaf Schn\"urer and Amnon Neeman for a number of very useful remarks on a previous draft.

\section{Notation and Definitions}
\label{sect:notation}

\subsection{Mixed (abelian) categories}

All abelian categories will be assumed to be \emph{finite-length categories} (i.e. noetherian and artinian) and to be skeletally small.  Let $\cA$ be an abelian category. We denote by $\Irr(\cA)$ the set of isomorphism classes of simple objects.  In this setting, the Grothendieck group $K(\cA)$ is a free abelian group on the set $\Irr(\cA)$.
Recall that $K(\cA)=F(\cA)/R(\cA)$, where $F(\cA)$ is the free abelian group on isomorphism classes $[M]$ of objects $M\in\cA$, and $R(\cA)$ is the subgroup generated by the expressions $[A]-[C]+[B]$ whenever there is a short exact sequence of the form $0\rightarrow A\rightarrow C\rightarrow B\rightarrow 0$. So the above claim follows from the existence of a Jordan--H\"older series and the uniqueness of its multiset of subquotients and states that
\begin{equation}\label{eqn:groth-free}
K(\cA) = \Z[\Irr(\cA)].
\end{equation}
Recall that $\cA$ is said to be a \emph{mixed} category if there is a function $\wt: \Irr(\cA) \to \Z$, called the \emph{weight function} such that
\begin{eqnarray}
\label{mixed}
\Ext^1(L,L') = 0 \qquad\text{if $\wt([L])\le \wt([L'])$ for simple objects $L$, $L'$.}
\end{eqnarray}
In the following we mostly write  $\wt(L)$ for $\wt([L])$.
A \emph{Tate twist} on a mixed category $\cA$ is an automorphism $(1): \cA \to \cA$ such that
\[
\wt(L(1)) = \wt(L) - 1
\qquad\text{if $L$ is simple.}
\]
Henceforth, all abelian categories will be mixed and equipped with a Tate twist.  For more details on mixed categories with Tate twist we refer to \cite{BGS, Del, Saito}.

\subsection{Weight filtration}

Recall the following standard fact (\cite[Lemma 4.1.2]{BGS}):

\begin{lem}
\label{lem:weightfilt}
Let $\cA$ be a mixed abelian category. Then any object $M\in\cA$  has a unique finite filtration $W_\bullet=W_\bullet(M)$ such that  $W_i/W_{i-1}$  is a direct sum of simple objects, all of weight $i$.
\end{lem}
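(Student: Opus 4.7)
The plan is to define $W_i(M)$ intrinsically as the largest subobject of $M$ whose composition factors all have weight $\le i$, and then extract semisimplicity of the subquotients from the mixed Ext-vanishing hypothesis~\eqref{mixed}.

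Step 1 (existence of the maximum). Given two subobjects $N_1,N_2\subseteq M$ whose composition factors all have weight $\le i$, the short exact sequence $0\to N_1\to N_1+N_2\to N_2/(N_1\cap N_2)\to 0$ shows the sum $N_1+N_2$ has the same property. Because $\cA$ is noetherian, the sum of all such subobjects is attained and gives a maximal subobject $W_i(M)$.

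Step 2 (dual characterization). A symmetric argument using $M/(K_1\cap K_2)\hookrightarrow M/K_1\oplus M/K_2$ shows that the family of subobjects $K\subseteq M$ for which $M/K$ has only composition factors of weight $>i$ is closed under intersection, hence contains a smallest element $W_i'(M)$. I will check $W_i=W_i'$: on one hand, $M/W_i$ can have no simple subobject of weight $\le i$ (else its preimage would strictly enlarge $W_i$), so composition factors of $M/W_i$ all have weight $>i$, giving $W_i'\subseteq W_i$; on the other, for any $N\subseteq M$ with composition factors of weight $\le i$, the subquotient $(N+W_i')/W_i'$ is simultaneously a quotient of $N$ and a subobject of $M/W_i'$, hence zero, so $N\subseteq W_i'$, giving $W_i\subseteq W_i'$.

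Step 3 (purity of $W_i/W_{i-1}$). Applying the dual characterization inside $W_i$, the subobject $W_{i-1}\subseteq W_i$ is smallest with $W_i/W_{i-1}$ having composition factors of weight $>i-1$. Combined with the primary characterization (composition factors of $W_i$ all have weight $\le i$), we conclude that all composition factors of $W_i/W_{i-1}$ have weight exactly $i$.

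Step 4 (semisimplicity). I claim any object $N$ whose composition factors all have weight $i$ is semisimple. By induction on length: choose a simple $L\subseteq N$, necessarily of weight $i$; by induction $N/L\cong\bigoplus_j L_j$ with each $\wt(L_j)=i$; and
\[
\Ext^1(N/L,L)=\bigoplus_j \Ext^1(L_j,L)=0
\]
by~\eqref{mixed}, since $\wt(L_j)=i\le i=\wt(L)$. Hence $N\cong L\oplus N/L$ is semisimple. Applying this to $N=W_i/W_{i-1}$ gives the required direct-sum decomposition.

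Step 5 (uniqueness and finiteness). Given any filtration $W_\bullet'$ with the stated semisimple subquotients of pure weight, $W_i'$ has composition factors of weight $\le i$, so $W_i'\subseteq W_i$ by the maximality from Step 1. Conversely, the pieces $W_j'/W_{j-1}'$ for $j>i$ are semisimple of weight $j>i$, so $M/W_i'$ has composition factors of weight $>i$; but $W_i/W_i'\subseteq M/W_i'$ is also a quotient of $W_i$, with composition factors of weight $\le i$, forcing $W_i/W_i'=0$. Finiteness is automatic since $M$ has finite length, so only finitely many weights occur.

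The delicate step is the double characterization in Steps 2--3: one really needs both the maximality and the dual minimality of $W_i$ to pin down $W_i/W_{i-1}$ to \emph{pure} weight $i$, since merely observing that simple subobjects of $W_i/W_{i-1}$ must have weight $i$ would still allow lower-weight composition factors to sit as deeper subquotients. Once this is in place, the Ext-vanishing in~\eqref{mixed} makes the semisimplicity of Step 4 essentially immediate.
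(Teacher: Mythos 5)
Your overall strategy (maximal subobject with weights $\le i$, dual characterization via minimal $K$ with high-weight quotient, then Ext-vanishing for semisimplicity) is reasonable, and Steps 1, 4, and 5 are correct. But Step 2 contains a genuine gap, and it is precisely the gap you warn against in your own closing remark.

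In establishing $W_i'\subseteq W_i$ you argue: $M/W_i$ has no simple subobject of weight $\le i$, ``so'' all its composition factors have weight $>i$. That implication is not automatic. In an arbitrary finite-length abelian category equipped with a weight function, an object can perfectly well have socle entirely of high weight while still having low-weight composition factors further up its socle filtration --- this is exactly the phenomenon your final paragraph says one must guard against when looking at $W_i/W_{i-1}$. The implication does hold in a mixed category, but only because of the Ext-vanishing hypothesis~\eqref{mixed}, which you never invoke at this point. What is needed is the following auxiliary claim, proved by induction on length: \emph{if $X$ has a composition factor of weight $\le i$, then $X$ has a simple subobject of weight $\le i$.} For the inductive step, take a simple $L'\subseteq X$; if $\wt(L')\le i$ we are done, and otherwise $X/L'$ still has a composition factor of weight $\le i$, hence by induction a simple subobject $L$ with $\wt(L)\le i<\wt(L')$; the preimage $Y\subseteq X$ of $L$ sits in $0\to L'\to Y\to L\to 0$, which splits because $\Ext^1(L,L')=0$ by~\eqref{mixed} (as $\wt(L)\le\wt(L')$), giving $L\hookrightarrow X$. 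Without this claim you have not shown that $W_i$ belongs to the family defining $W_i'$, so the equality $W_i=W_i'$ --- and with it the purity of $W_i/W_{i-1}$ in Step 3 --- is unjustified. Note that this is the place where the full strength of~\eqref{mixed} is used; the appeal to it in Step 4 is a much milder consequence and does not substitute. Once the auxiliary claim is inserted, the rest of your argument goes through.
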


This filtration is called the {\it weight filtration}. In case $W_i/W_{i-1}\not=0$ we say $i$ {\it
occurs as a weight} in $M$. If only one weight occurs, then $M$ is called {\it pure} of this weight.  In general, the maximal weight occurring in $M$ is called the {\em degree} of $M$. We say $M$ has weights $\leq n$ if the degree of $M$ is smaller or equal $n$.

Note that morphisms are strictly compatible with weight filtrations, in the sense that $f(W_i(M))\subset W_i(N)$ for any morphism $f:M\rightarrow N$ between objects $M$, $N$ in $\cA$ with weight filtrations $W_\bullet(M)$ and $W_\bullet(N)$. It is a consequence that
\begin{equation}\label{eqn:mixed-defn}
\Ext^i(L, L') = 0, \qquad\text{if $\wt([L]) -i < \wt([L'])$,}
\end{equation}
or, equivalently,
\begin{eqnarray}\label{eqn:mixed-defn2}
\Ext^i(L, L') \not= 0 &\Rightarrow&\wt([L])\geq\wt([L'])+i.
\end{eqnarray}

\begin{ex}
\label{ex:mixed}
Our standard example of a mixed abelian category is the category  $A\mgmod$ of finite-dimensional graded modules over a finite-dimensional positively graded algebra $A=\oplus_{i\in\mathbb{Z}_{\geq0}} A_i$ over the complex numbers with semisimple $A_0$. Each simple module $L$ is concentrated in a single degree $-\wt(L)$ and the Tate twist $(1)$ is given by the automorphism $\langle 1\rangle$ which shifts the degree up by $1$, i.e. if $M=\oplus_{j\in\mZ}M_j$ then $M(1)=M\langle 1\rangle$ is the graded module with graded components $(M\langle 1\rangle))_j=M_{j-1}$.
Any $\mC$-linear mixed abelian category can be realized as the category of modules over a projective limit of such positively graded algebras; see \cite[4.1.6]{BGS} for a precise statement.
\end{ex}

For $n \in \Z$, let $\cA_{\le n}$ (resp.~$\cA_n$, $\cA_{\ge n}$), be the Serre subcategory of $\cA$ generated by the simple objects of weight${}\le n$ (resp.~$n$,${}\ge n$).  If $m \le n$, we also put $\cA_{[m,n]} = \cA_{\ge m} \cap \cA_{\le n}$.  For any $X \in \cA$, the weight filtration (Lemma \ref{lem:weightfilt}) defines a functorial short exact sequence
\[
0 \to \beta_{\le n}X \to X \to \beta_{\ge n+1}X \to 0
\]
where $\beta_{\le n}X$ has weights${}\le n$ and $\beta_{\ge n+1}X$ has weights${}\ge n+1$.  Moreover, the functors $\beta_{\le n}: \cA \to \cA_{\le n}$ and $\beta_{\ge n+1}: \cA \to \cA_{\ge n+1}$ are exact, so we can apply them to a chain complex $C^\bullet$ in $\cA$ and get a short exact sequence of chain complexes.  These functors induce derived functors $\Dm(\cA) \to \Dm(\cA)$, so for any object $X \in \Dm(\cA)$, there is a functorial distinguished triangle
\begin{eqnarray}
\label{triangle}
\beta_{\le n}X \to X \to \beta_{\ge n+1}X \to
\end{eqnarray}
in $\Dm(\cA)$.  (The same remarks apply to the bounded $\Db(\cA)$ and bounded below $\Dp(\cA)$ derived categories as well, but we will work primarily with $\Dm(\cA)$.)  These functors endow $\Dm(\cA)$ with a \emph{baric structure} in the sense of~\cite{at:bstc}. The connecting homomorphism from \eqref{triangle} is in fact unique (in contrast with the  context of weight structures as studied e.g. in \cite{Bondarko}, \cite{Schnuerer}).

\begin{defn}
Let $F: \cA \to \cB$ be an additive functor between two mixed abelian categories.  The \emph{weight amplitude} of $F$ is defined to be the infimum of the set
\[
\left\{ a \in \Z_{\ge 0} \mid \text{$F(\cA_{\le n}) \subset \cB_{\le n+a}$ for all $n \in \Z$} \right\} \cup \{ +\infty\}.
\]
\end{defn}

\subsection{Coefficients rings and Grothendieck groups}
Most Grothendieck groups we consider will naturally be modules over one of the following rings (two of which were mentioned in Section~\ref{sect:intro}):
\[
\oR = \Z[q],
\qquad
\fR = \Z[q,q^{-1}],
\qquad
\coR = \Z[[q]],
\qquad
\cR = \fR \otimes_{\oR} \coR = \Z[[q]][q^{-1}].
\]
For instance, the Tate twist induces an automorphism $q: K(\cA) \to K(\cA)$, where $[X(1)] = q[X]$, and so makes $K(\cA)$ into an $\fR$-module.  It also restricts to a fully faithful, exact functor $(1): \cA_{\le n} \to \cA_{\le n}$, but this is no longer an equivalence.  The Grothendieck group $K(\cA_{\le n})$ is naturally an $\oR$-submodule of $K(\cA)$.  It follows from~\eqref{eqn:groth-free} that $K(\cA)$ is free as an $\fR$-module (see \cite[Lemma 4.3.2]{BGS}).

In fact, for any $n \in \Z$, we have canonical isomorphisms
\begin{equation}\label{eqn:groth-rfree}
K(\cA_{\le n}) \cong \oR[\Irr(\cA_n)]
\qquad\text{and}\qquad
K(\cA) \cong \fR \otimes_{\oR} K(\cA_{\le n}).
\end{equation}
The $\oR$-module $K(\cA_{\le n})$ is equipped with a natural $(q)$-adic topology, in which the submodules
\[
q^i \cdot K(\cA_{\le n}) = K(\cA_{\le n-i})
\]
for $i\geq 0$ constitute a basis of neighborhoods around $0$.  Similarly, we endow $K(\cA)$ with a topology (also called ``$(q)$-adic'') by declaring the submodules $K(\cA_{\le i}) \subset K(\cA)$ to be a basis of neighborhoods around $0$.  It follows from~\eqref{eqn:groth-rfree} that
\begin{equation}\label{eqn:groth-hausdorff}
\bigcap_{m \in \Z} K(\cA_{\le m}) = 0.
\end{equation}
In other words, the $(q)$-adic topology on $K(\cA)$ or $K(\cA_{\le n})$ is Hausdorff.  Let
\[
\Kh(\cA_{\le n})
\qquad\text{and}\qquad
\Kh(\cA)
\]
denote the completions of each of these modules in the $(q)$-adic topology.  These completions are modules over $\coR$ and $\cR$, respectively.

\subsection{Definition of $\Ddf(\cA)$}

Given a mixed abelian category with a Tate twist, we define the following full subcategory of $ \Dm(\cA)$:
\[
\Ddf(\cA) = \left\{ X \in \Dm(\cA) \,\Big|\,
\begin{array}{c}
\text{for each $m \in \Z$, only finitely many of the $H^i(X)$} \\
\text{contain a composition factor of weight${}>m$}
\end{array} \right\}.
\]
It is easy to see that  $\Ddf(\cA)$ is closed under suspensions (or shifts) $[i]$, $i\in\mathbb{Z}$, and cones, and hence that it is a full triangulated subcategory of $\Dm(\cA)$.

For $n \in \Z$, we also define the following full subcategories of $\Ddf(\cA)$:
\begin{align*}
\Ddf_{\le n}(\cA) &= \{ X \in \Ddf(\cA) \mid \text{for all $i \in \Z$, $H^i(X)$ has weights${}\le n$} \}, \\
\Ddf_{\ge n}(\cA) &= \{ X \in \Ddf(\cA) \mid \text{for all $i \in \Z$, $H^i(X)$ has weights${}\ge n$} \}.
\end{align*}
They are triangulated categories. If $m \le n$, we also put $\Ddf_{[m,n]}(\cA) = \Ddf_{\ge m}(\cA) \cap \Ddf_{\le n}(\cA)$.  It follows from the definition of $\Ddf(\cA)$ that any object in $\Ddf_{\ge n}(\cA)$ has only finitely many nonzero cohomology objects, so
\begin{equation}
\label{eqn:DninD}
\Ddf_{\ge n}(\cA) \subset \Db(\cA).
\end{equation}
The Tate twist induces an autoequivalence $(1): \Ddf(\cA) \to \Ddf(\cA)$ and a fully faithful functor $(1): \Ddf_{\le n}(\cA) \to \Ddf_{\le n}(\cA)$, so $K(\Ddf(\cA))$ and $K(\Ddf_{\le n}(\cA))$ are modules over $\fR$ and $\oR$, respectively. The categories $\Ddf_{\geq m}(\cA)$ are not preserved by the Tate twist, but nevertheless we will construct in the next section an $\oR$-module structure on their Grothendieck groups.

\subsection{Topological Grothendieck groups}
Recall that the Grothen\-dieck group of a small triangulated category $\cC$ is defined as $K(\cC)=F(\cC)/R(\cC)$, where $F(\cC)$ is the free abelian group on isomorphism classes $[M]$ of objects $M\in\cC$, and $R(\cC)$ is the ideal generated by the expressions $[A]-[C]+[B]$ whenever there is a distinguished triangle of the form $A\rightarrow C\rightarrow B\rightarrow A[1]$.  Suppose now that $\cC$ is a subcategory of the derived category of $\cA$ that is stable under $\beta_{\le m}$ and $\beta_{\ge m}$ for all $m$.  Let
\[
I(\cC) = \{ f \in K(\cC) \mid \text{$[\beta_{\ge m}]f = 0$ in $K(\cC)$ for all $m \in \Z$} \}.
\]
We define the \emph{topological Grothendieck group} of $\cC$ to be
\[
\KH(\cC) = K(\cC) / I(\cC).
\]
The reason for the terminology will become clear in Remark~\ref{rmk:hausdorff}.  We will eventually prove a general result (cf.~Theorem~\ref{thm:main}\eqref{it:der-functor}) about derived functors and the topological Grothendieck group.  For now, let us note simply that any functor of triangulated categories that commutes with all $\beta_{\ge m}$  induces a homomorphism of topological Grothendieck groups.

\begin{rmk}
If $\cC = \Db(\cA)$, the topological Grothendieck group coincides with the ordinary Grothendieck group.  Indeed, it can be shown that $K(\cA) \cong K(\cC)$, and then, in view of~\eqref{eqn:groth-free} and~\eqref{eqn:groth-rfree}, it follows from~\eqref{eqn:groth-hausdorff} that $I(\cC) = 0$.  Therefore, $K(\Db(\cA)) \cong \KH(\Db(\cA))$.
\end{rmk}

\section{The Grothendieck Group of $\Ddf(\cA)$}
\label{sect:groth}

The main goal of this section is to prove part~\eqref{it:compl-isom} of Theorem~\ref{thm:main}.

\subsection{Sequences of $\oR$-modules}
\label{subsect:ksum}

The categories $\cA_{\ge m}$ and $\Ddf_{\ge m}(\cA)$ are not preserved by the Tate twist $(1)$, so we use a different functor to make $K(\cA_{\ge m})$ and $K(\Ddf_{\ge m}(\cA))$ into $\oR$-modules: we put
\[
q \cdot [X] = [\beta_{\ge m}(X(1))] \qquad\text{for $X \in \cA_{\ge m}$ or $X \in \Ddf_{\ge m}(\cA)$.}
\]
This definition makes sense because $\beta_{\ge m} \circ (1)$ is an exact functor that preserves $\cA_{\ge m}$ and $\Ddf_{\ge m}(\cA)$.  The same definition also makes sense for $\cA_{[m,n]}$ and $\Ddf_{[m,n]}(\cA)$.

\begin{lem}\label{lem:ksum}
For any $n \in \Z$, there is commutative diagram of $\oR$-modules
\begin{equation}\label{eqn:ksum}
\vcenter{\xymatrix{
0 \ar[r] & K(\cA_{\le n}) \ar[r]\ar[d] & K(\cA) \ar[r]^-{[\beta_{\ge n+1}]}\ar[d] & K(\cA_{\ge n+1}) \ar[r]\ar[d] & 0 \\
0 \ar[r] & \KH(\Ddf_{\le n}(\cA)) \ar[r] & \KH(\Ddf(\cA)) \ar[r]^-{[\beta_{\ge n+1}]} & K(\Ddf_{\ge n+1}(\cA)) \ar[r] & 0}}
\end{equation}
in which the rows are short exact sequences.  Moreover, the first two vertical maps are injective, and the last one is an isomorphism.
\end{lem}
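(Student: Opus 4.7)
The proof has two main ingredients. At the abelian-group level both rows arise from a splitting indexed by weight; I then check that the horizontal maps of the diagram are $\oR$-linear, even though the convenient splittings are not.

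For the top row, I use the identification $K(\cA)\cong\Z[\Irr(\cA)]$ from \eqref{eqn:groth-free}: $K(\cA_{\le n})$ is the free subgroup on simples of weight ${}\le n$, and $[\beta_{\ge n+1}]$ kills these while being the identity on simples of weight ${}\ge n+1$. Exactness of the row is immediate. $\oR$-linearity of $[\beta_{\ge n+1}]$ follows from the observation that, for any $X$, both $\beta_{\ge n+1}(X(1))$ and $\beta_{\ge n+1}((\beta_{\ge n+1}X)(1))$ extract the same weight-${}\ge n+2$ piece of $X$, shifted by $(1)$.

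The heart of the proof is the bottom row. Writing $\iota_{\le n}$, $\iota_{\ge n+1}$ for the inclusions into $\Ddf(\cA)$, the distinguished triangle \eqref{triangle} yields $[X]=[\iota_{\le n}\beta_{\le n}X]+[\iota_{\ge n+1}\beta_{\ge n+1}X]$ in $K(\Ddf(\cA))$; together with $\beta_{\le n}\iota_{\le n}=\mathrm{id}$, $\beta_{\ge n+1}\iota_{\ge n+1}=\mathrm{id}$, and vanishing of the cross compositions, this gives a (non-$\oR$-linear) abelian-group splitting $K(\Ddf(\cA))\cong K(\Ddf_{\le n}(\cA))\oplus K(\Ddf_{\ge n+1}(\cA))$. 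Next I have to identify $I(\Ddf(\cA))$ with respect to this splitting. The functors $\beta_{\ge m}$, $\beta_{\le n}$, $\iota_{\le n}$, $\iota_{\ge n+1}$ all mutually commute (a weight-filtration check), so for $f=\iota_{\le n}f_{\le n}+\iota_{\ge n+1}f_{\ge n+1}$ the condition $[\beta_{\ge m}]f=0$ decouples into separate conditions on the two summands. Taking $m=n+1$ forces $f_{\ge n+1}=0$, and the remaining conditions read exactly $f_{\le n}\in I(\Ddf_{\le n}(\cA))$. Also $I(\Ddf_{\ge n+1}(\cA))=0$ (since $\beta_{\ge n+1}$ is the identity there), so quotienting yields
\[
\KH(\Ddf(\cA))\;\cong\;\KH(\Ddf_{\le n}(\cA))\oplus K(\Ddf_{\ge n+1}(\cA)),
\]
which gives the bottom row; its horizontal maps are $\oR$-linear by the same argument as in the top row.

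The third vertical map admits the explicit inverse $[X]\mapsto\sum_i(-1)^i[H^i(X)]$, well-defined by the cohomology long exact sequence attached to a distinguished triangle and a finite sum thanks to \eqref{eqn:DninD}; truncation triangles (which preserve $\Ddf_{\ge n+1}(\cA)$) show both compositions are the identity. Injectivity of the first two vertical maps uses the basis of $K(\cA)$ by simples: if $f=\sum_L a_L[L]\in I(\Ddf(\cA))$, then for each $m$ the element $[\beta_{\ge m}]f=\sum_{\wt(L)\ge m}a_L[L]$ vanishes in $K(\Ddf_{\ge m}(\cA))\cong K(\cA_{\ge m})$; freeness forces the surviving coefficients to vanish, and letting $m\to-\infty$ concludes; the argument for $K(\cA_{\le n})$ is identical. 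The principal obstacle is the clean description of $I(\Ddf(\cA))$ via the non-$\oR$-linear splitting — the other pieces are either mechanical or follow directly from basic weight-filtration identities.
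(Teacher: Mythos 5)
Your proposal is correct and follows essentially the same route as the paper: split $K(\Ddf(\cA))$ as $K(\Ddf_{\le n}(\cA)) \oplus K(\Ddf_{\ge n+1}(\cA))$ via $\beta_{\le n}$ and $\beta_{\ge n+1}$, show that $I(\Ddf(\cA))$ decomposes accordingly with $I(\Ddf_{\ge n+1}(\cA)) = 0$, check $\oR$-linearity of the maps, identify $K(\Ddf_{\ge n+1}(\cA))$ with $K(\cA_{\ge n+1})$ using \eqref{eqn:DninD}, and deduce injectivity of the vertical maps from the freeness of $K(\cA)$ on simples. The only cosmetic differences are that you build the splitting directly from the retraction identities $\beta\iota = \mathrm{id}$ and the triangle \eqref{triangle} rather than via an explicit map on the free groups $F(-)$, and that you spell out the inverse $[X]\mapsto\sum(-1)^i[H^i(X)]$ where the paper invokes the general fact that $\cA_{\ge n+1}$ is the heart of a bounded $t$-structure on $\Ddf_{\ge n+1}(\cA)$.
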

\begin{proof}
We begin by treating the second row in this diagram.  Consider the surjective map $\gamma: F(\Ddf(\cA)) \to F(\Ddf_{\le n}(\cA)) \oplus F(\Ddf_{\ge n+1}(\cA))$ defined as $[X] \mapsto ([\beta_{\le n}X], [\beta_{\ge n+1}X])$.  Given a distinguished triangle $A \to X \to B\to$ in $\Ddf(\cA)$, we have
\[
\gamma([X] - [A] - [B]) =  ([\beta_{\le n}X] - [\beta_{\le n}A] - [\beta_{\le n}B],
[\beta_{\ge n+1}X] - [\beta_{\ge n+1}A] - [\beta_{\ge n+1}B]).
\]
Since the functors $\beta_{\le n}$ and $\beta_{\ge n+1}$  are functors of triangulated categories, this calculation shows that $\gamma(R(\Ddf(\cA))) \subset R(\Ddf_{\le n}(\cA)) \oplus R(\Ddf_{\ge n+1}(\cA))$.  Since the restriction of $\beta_{\le n}$, resp.~$\beta_{\ge n+1}$, to $\Ddf_{\le n}(\cA)$, resp.~$\Ddf_{\ge n+1}(\cA)$, is the identity functor, we actually have $\gamma(R(\Ddf(\cA))) = R(\Ddf_{\le n}(\cA)) \oplus R(\Ddf_{\ge n+1}(\cA))$.  We conclude that $\gamma$ induces an isomorphism of abelian groups
\[
K(\Ddf(\cA)) = K(\Ddf_{\le n}(\cA)) \oplus K(\Ddf_{\ge n+1}(\cA)).
\]
Because $\beta_{\ge m}$ preserves each of $\Ddf_{\le n}(\cA)$ and $\Ddf_{\ge n+1}(\cA)$, we have $I(\Ddf(\cA)) = I(\Ddf_{\le n}(\cA)) \oplus I(\Ddf_{\ge n+1}(\cA))$.  On the other hand, the fact that $\beta_{\ge n+1}$ is the identity functor on $\Ddf_{\ge n+1}(\cA)$ implies that $I(\Ddf_{\ge n+1}(\cA)) = 0$.  We deduce that
\begin{equation}\label{eqn:ksum-ab}
\KH(\Ddf(\cA)) = \KH(\Ddf_{\le n}(\cA)) \oplus K(\Ddf_{\ge n+1}(\cA)).
\end{equation}
In this direct sum, the inclusion $\KH(\Ddf_{\le n}(\cA)) \to \KH(\Ddf(\cA))$ is in fact induced by the inclusion functor $\iota: \Ddf_{\le n}(\cA) \to \Ddf(\cA)$, since $\beta_{\le n} \circ \iota = \mathrm{id}$ and $\beta_{\ge n+1} \circ \iota = 0$.  Since $\iota$ commutes with the Tate twist, the inclusion map $K(\Ddf_{\le n}(\cA)) \to K(\Ddf(\cA))$ is a homomorphism of $\oR$-modules.  On the other hand, the setup is such that $[\beta_{\ge n+1}]: K(\Ddf(\cA)) \to K(\Ddf_{\ge n+1}(\cA))$
commutes with the action of $q \in \oR$ on both groups.
Thus,~\eqref{eqn:ksum-ab} gives rise to the desired short exact sequence (the second line of \eqref{eqn:ksum}) of $\oR$-modules.

Since $\beta_{\le n}$ and $\beta_{\ge n+1}$ are $t$-exact (for the $t$-structure induced from the standard $t$-structure), the same argument can be repeated with the abelian categories $\cA_{\le n}$, $\cA$, and $\cA_{\ge n+1}$ (but skipping the passage to the topological Grothendieck group), yielding the $\coR$-module structure and exactness of the first row in the diagram~\eqref{eqn:ksum}.  Since all maps in that diagram are induced by inclusion functors or by $\beta_{\ge n+1}$, it is easy to see that the diagram commutes.

Recall from \eqref{eqn:DninD} that $\Ddf_{\ge n+1}(\cA) \subset \Db(\cA)$.  The fact that $K(\cA_{\ge n+1}) \to K(\Ddf_{\ge n+1}(\cA))$ is an isomorphism follows from the fact that $\cA_{\ge n+1}$ is the heart of a \emph{bounded} $t$-structure on $\Ddf_{\ge n+1}(\cA)$.

Next, consider an element $f \in K(\cA)$.  We may write $f = a_1[X_1] + \cdots + a_k[X_k]$ for suitable simple objects $X_i \in \cA$.  Now, choose $n$ such that $n < \wt(X_i)$ for all $i$.  It is clear from Lemma \ref{lem:weightfilt} that $[\beta_{\ge n+1}]f \ne 0$.  In view of the preceding paragraph, it follows from the commutativity of~\eqref{eqn:ksum} that the image of $f$ in $\KH(\Ddf(\cA))$ is nonzero.  Thus, the middle vertical arrow in~\eqref{eqn:ksum} is injective.  The injectivity of the first vertical arrow is then clear as well.
\end{proof}

The same reasoning yields the following related statement.

\begin{lem}\label{lem:ksum-fin}
Supose $m \le n$.  There is a commutative diagram of $\oR$-modules
\begin{equation}
\label{eqn:sestwo}
\vcenter{\xymatrix{
0 \ar[r] & K(\cA_{\le m}) \ar[r]\ar[d] & K(\cA_{\le n}) \ar[r]\ar[d] & K(\cA_{[m+1,n]}) \ar[r]\ar[d] & 0 \\
0 \ar[r] & \KH(\Ddf_{\le m}(\cA)) \ar[r] & \KH(\Ddf_{\le n}(\cA)) \ar[r] & K(\Ddf_{[m+1,n]}(\cA)) \ar[r] & 0}}
\end{equation}
in which the rows are short exact sequences.  Moreover, the first two vertical maps are injective, and the last is an isomorphism. \qed
\end{lem}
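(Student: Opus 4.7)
The proof plan is to repeat the argument of Lemma \ref{lem:ksum} almost verbatim, but applied inside $\Ddf_{\le n}(\cA)$ instead of $\Ddf(\cA)$, using the split pair $(\beta_{\le m}, \beta_{\ge m+1})$ in place of $(\beta_{\le n}, \beta_{\ge n+1})$.

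First I would observe that the functors $\beta_{\le m}$ and $\beta_{\ge m+1}$ restrict to $t$-exact functors $\Ddf_{\le n}(\cA) \to \Ddf_{\le m}(\cA)$ and $\Ddf_{\le n}(\cA) \to \Ddf_{[m+1,n]}(\cA)$ respectively. Then, imitating the $\gamma$-argument from the proof of Lemma \ref{lem:ksum}, I would show that the assignment $[X] \mapsto ([\beta_{\le m} X], [\beta_{\ge m+1} X])$ induces a direct sum decomposition
\[
K(\Ddf_{\le n}(\cA)) = K(\Ddf_{\le m}(\cA)) \oplus K(\Ddf_{[m+1,n]}(\cA)),
\]
and that $I(\Ddf_{\le n}(\cA)) = I(\Ddf_{\le m}(\cA)) \oplus I(\Ddf_{[m+1,n]}(\cA))$ since each $\beta_{\ge m'}$ preserves both summands. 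Next I would note that $I(\Ddf_{[m+1,n]}(\cA)) = 0$: any $X$ in $\Ddf_{[m+1,n]}(\cA)$ satisfies $\beta_{\ge m+1} X = X$, so if $[\beta_{\ge m'}]f = 0$ for all $m'$, taking $m' = m+1$ forces $f = 0$. Consequently $\KH(\Ddf_{\le n}(\cA)) = \KH(\Ddf_{\le m}(\cA)) \oplus K(\Ddf_{[m+1,n]}(\cA))$, which gives the second row of \eqref{eqn:sestwo}. The $\oR$-module structures are compatible because the inclusion functor $\Ddf_{\le m}(\cA) \hookrightarrow \Ddf_{\le n}(\cA)$ commutes with the Tate twist, and $[\beta_{\ge m+1}]$ commutes with the action of $q$ on both sides by construction of the $\oR$-module structure on $K(\Ddf_{[m+1,n]}(\cA))$ from Section \ref{subsect:ksum}.

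For the top row I would run the analogous (easier, purely abelian) argument using the exact functors $\beta_{\le m}, \beta_{\ge m+1}: \cA_{\le n} \to \cA_{\le n}$ and the functorial weight short exact sequence, giving an $\oR$-linear splitting $K(\cA_{\le n}) = K(\cA_{\le m}) \oplus K(\cA_{[m+1,n]})$. Commutativity of \eqref{eqn:sestwo} is automatic since every arrow is induced either by an inclusion of a subcategory or by $\beta_{\ge m+1}$. For the right-hand vertical isomorphism $K(\cA_{[m+1,n]}) \simto K(\Ddf_{[m+1,n]}(\cA))$, I would invoke \eqref{eqn:DninD} to get $\Ddf_{[m+1,n]}(\cA) \subset \Db(\cA)$ and then use that $\cA_{[m+1,n]}$ is the heart of the bounded $t$-structure induced on $\Ddf_{[m+1,n]}(\cA)$, exactly as in the proof of Lemma \ref{lem:ksum}.

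Finally, for injectivity of the two left vertical maps, I would either reproduce the weight-filtration argument from the end of Lemma \ref{lem:ksum}'s proof, or, more cleanly, paste \eqref{eqn:sestwo} into the diagram \eqref{eqn:ksum} of Lemma \ref{lem:ksum} so that injectivity of $K(\cA_{\le n}) \to \KH(\Ddf_{\le n}(\cA))$ follows from the already-established injectivity of $K(\cA) \to \KH(\Ddf(\cA))$, and then injectivity of $K(\cA_{\le m}) \to \KH(\Ddf_{\le m}(\cA))$ follows from exactness of the top row. I do not anticipate any genuine obstacle here; the only point that requires care is checking that $I(\Ddf_{[m+1,n]}(\cA))$ vanishes (not all $\beta_{\ge m'}$ are the identity on $\Ddf_{[m+1,n]}(\cA)$, but $\beta_{\ge m+1}$ is, which is all that is needed).
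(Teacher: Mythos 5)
Your proposal is correct and matches the paper's intent: the paper precedes this lemma with the remark ``The same reasoning yields the following related statement'' and supplies no separate proof, and your plan is precisely to rerun the argument of Lemma~\ref{lem:ksum} with $\Ddf(\cA)$ and $(\beta_{\le n},\beta_{\ge n+1})$ replaced by $\Ddf_{\le n}(\cA)$ and $(\beta_{\le m},\beta_{\ge m+1})$. Your two observations that sharpen the adaptation, namely that $\beta_{\ge m+1}$ being the identity on $\Ddf_{[m+1,n]}(\cA)$ suffices to kill $I(\Ddf_{[m+1,n]}(\cA))$, and that the injectivity of the left two verticals can be read off from the already-proved Lemma~\ref{lem:ksum} by pasting squares, are both sound and fill in exactly what ``same reasoning'' elides.
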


\subsection{$(q)$-adic topology}
Recall from \eqref{eqn:ksum} that $\KH(\Ddf_{\le m}(\cA))$ can naturally be identified with a $\oR$-submodule of $\KH(\Ddf(\cA))$ (or of $\KH(\Ddf_{\le n})$, if $m \le n$).  Thus, we are at last able to define the $(q)$-adic topology on these modules: we take the set of submodules of the form $\KH(\Ddf_{\le m})$ to be a basis of neighborhoods of $0$.  It follows from the proof of Lemma~\ref{lem:ksum} that for $s\in\mZ$
\begin{equation}
\label{eqn:qacts}
q^s \cdot \KH(\Ddf_{\le m}(\cA)) = \KH(\Ddf_{\le m-s}(\cA)),
\end{equation}
so $\KH(\Ddf_{\le n}(\cA))$ and $\KH(\Ddf(\cA))$ are naturally topological $\oR$- and $\fR$-modules, respectively.

\begin{lem}\label{lem:hausdorff}
In the $(q)$-adic topology, the groups $\KH(\Ddf_{\le n}(\cA))$ and $\KH(\Ddf(\cA))$ are Hausdorff.
\end{lem}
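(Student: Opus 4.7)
The plan is to reduce Hausdorffness to showing that the intersection of the basic neighborhoods $\KH(\Ddf_{\le m}(\cA))$ (over all $m$, or all $m \le n$) is zero. Since these neighborhoods are $\oR$-submodules forming a filtration by definition of the $(q)$-adic topology, Hausdorffness of the topological $\oR$-module is equivalent to this intersection being trivial.

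For $\KH(\Ddf(\cA))$, I would first invoke Lemma~\ref{lem:ksum} to identify each $\KH(\Ddf_{\le m}(\cA))$ with the kernel of the map $[\beta_{\ge m+1}]\colon \KH(\Ddf(\cA)) \to K(\Ddf_{\ge m+1}(\cA))$ appearing in the exact bottom row of \eqref{eqn:ksum}. Thus an element $f \in \bigcap_{m \in \Z} \KH(\Ddf_{\le m}(\cA))$ is represented by some $\tilde f \in K(\Ddf(\cA))$ with $[\beta_{\ge m+1}]\tilde f = 0$ in $K(\Ddf_{\ge m+1}(\cA))$ for every $m \in \Z$. But that is precisely the condition defining $I(\Ddf(\cA))$, so $\tilde f \in I(\Ddf(\cA))$ and hence $f = 0$ in $\KH(\Ddf(\cA)) = K(\Ddf(\cA))/I(\Ddf(\cA))$.

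For $\KH(\Ddf_{\le n}(\cA))$, the argument is the same, but using Lemma~\ref{lem:ksum-fin} in place of Lemma~\ref{lem:ksum}: for $m \le n$, the submodule $\KH(\Ddf_{\le m}(\cA))$ is the kernel of the surjection $\KH(\Ddf_{\le n}(\cA)) \to K(\Ddf_{[m+1,n]}(\cA))$ given by (the cohomologically bounded avatar of) $[\beta_{\ge m+1}]$, since $\beta_{\ge m+1}$ and $\beta_{[m+1,n]}$ agree on objects with weights${}\le n$. An element in the intersection therefore lifts to some $\tilde f \in K(\Ddf_{\le n}(\cA))$ killed by $[\beta_{\ge m+1}]$ for every $m$, which puts it in $I(\Ddf_{\le n}(\cA))$ and so makes it zero in $\KH(\Ddf_{\le n}(\cA))$.

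I do not anticipate any real obstacle here: both statements are formal consequences of the definition $\KH(\cC) = K(\cC)/I(\cC)$ together with the identification of the basic neighborhoods as kernels of the maps $[\beta_{\ge m+1}]$ supplied by Lemmas~\ref{lem:ksum} and~\ref{lem:ksum-fin}. The only thing to be careful about is to check at the beginning that, because the neighborhoods are all subgroups, Hausdorffness does reduce to the intersection being zero, and to use that $[\beta_{\ge m+1}]$ is well defined on $\KH$ (it factors through $K(\cC)/I(\cC)$ because its target $K(\Ddf_{\ge m+1}(\cA))$ has trivial $I$, as noted in the proof of Lemma~\ref{lem:ksum}).
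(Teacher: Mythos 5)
Your proof is correct and takes essentially the same route as the paper: reduce Hausdorffness to $\bigcap_m \KH(\Ddf_{\le m}(\cA)) = 0$, identify $\KH(\Ddf_{\le m}(\cA))$ as the kernel of $[\beta_{\ge m+1}]$ via Lemma~\ref{lem:ksum} (resp.~Lemma~\ref{lem:ksum-fin}), and observe that an element killed by all $[\beta_{\ge m}]$ lifts into $I(\Ddf(\cA))$ and so vanishes in the quotient. You spell out a couple of steps the paper leaves implicit (notably that $[\beta_{\ge m}]$ is well-defined on $\KH$ and that $\KH(\Ddf_{\ge m}(\cA)) = K(\Ddf_{\ge m}(\cA))$), but the argument is identical in substance.
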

\begin{proof}
Being Hausdorff is equivalent to the condition that
\[
\bigcap_{m \in \Z} \KH(\Ddf_{\le m}(\cA)) = \{ 0\}.
\]
If $f \in \bigcap_{m \in \Z} \KH(\Ddf_{\le m}(\cA))$, it follows from Lemma~\ref{lem:ksum} and \eqref{eqn:groth-rfree} that $[\beta_{\ge m}]f = 0$ for all $m$, but then it is clear from the definition of $\KH(\Ddf(\cA))$ that $f = 0$.
\end{proof}

\begin{rmk}\label{rmk:hausdorff}
The statements of Section~\ref{subsect:ksum} are still true if we replace $\KH(\Ddf(\cA))$  by $K(\Ddf(\cA))$, and the definition of the $(q)$-adic topology makes sense for $K(\Ddf(\cA))$ as well, but the resulting space is not Hausdorff.  In fact, $\KH(\Ddf(\cA))$ is the \emph{universal} Hausdorff quotient of $K(\Ddf(\cA))$, in the sense that every continuous homomorphism from $K(\Ddf(\cA))$ to a Hausdorff abelian group factors through $\KH(\Ddf(\cA))$. This construction is well-known in the context of topological groups, see e.g \cite[Note after 3.22]{Strlccpt}.
\end{rmk}

\begin{lem}\label{lem:complete}
The $\oR$-module $\KH(\Ddf_{\le n}(\cA))$ is complete in the $(q)$-adic topology. Indeed, the natural map $K(\cA_{\le n}) \to \KH(\Ddf_{\le n}(\cA))$ induces an isomorphism
\[
\Kh(\cA_{\le n}) \to \KH(\Ddf_{\le n}(\cA)).
\]
\end{lem}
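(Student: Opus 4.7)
The plan is to build mutually inverse maps between $\Kh(\cA_{\le n})$ and $\KH(\Ddf_{\le n}(\cA))$ that are compatible with the natural map. By Lemma~\ref{lem:ksum-fin}, for each $m \le n$ the right column of~\eqref{eqn:sestwo} is an isomorphism $K(\cA_{[m+1,n]}) \simto K(\Ddf_{[m+1,n]}(\cA))$, so the induced map $K(\cA_{\le n})/K(\cA_{\le m}) \to \KH(\Ddf_{\le n}(\cA))/\KH(\Ddf_{\le m}(\cA))$ is also an isomorphism. Passing to the inverse limit over $m$ yields a canonical isomorphism
\[
\Kh(\cA_{\le n}) \simto \varprojlim_{m} \KH(\Ddf_{\le n}(\cA))/\KH(\Ddf_{\le m}(\cA)),
\]
and Lemma~\ref{lem:hausdorff} identifies $\KH(\Ddf_{\le n}(\cA))$ with a submodule of this inverse limit. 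Composing, I obtain an injective continuous $\oR$-module map $\psi : \KH(\Ddf_{\le n}(\cA)) \hookrightarrow \Kh(\cA_{\le n})$ whose restriction to $K(\cA_{\le n})$ is the canonical embedding into the completion.

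The heart of the proof is to show $\psi$ is surjective; once this is done, $\psi^{-1}$ is the required isomorphism and completeness of $\KH(\Ddf_{\le n}(\cA))$ follows automatically. Fix a compatible family $(v_m)$ with $v_m \in K(\cA_{[m+1,n]})$. Since the transition $K(\cA_{[m+1,n]}) \to K(\cA_{[m'+1,n]})$ for $m \le m'$ simply kills the classes of simples of weight in $[m+1,m']$, extracting the pure weight-$k$ component of any $v_m$ with $m < k$ yields a well-defined element $u_k \in K(\cA_k)$ satisfying $v_m = \sum_{k=m+1}^{n} u_k$ for every $m \le n$. Because $K(\cA_k)$ is the free abelian group on $\Irr(\cA_k)$, I may write $u_k = [A_k] - [B_k]$ for semisimple pure objects $A_k, B_k \in \cA_k$.

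Next I would define the complex $X^\bullet$ with zero differentials by
\[
X^{-2\ell} = A_{n-\ell}, \qquad X^{-2\ell-1} = B_{n-\ell} \qquad (\ell \ge 0),
\]
and $X^i = 0$ for $i > 0$. Since each $X^i$ is pure of weight $n-\ell$, and these weights tend to $-\infty$ as $\ell \to \infty$, the object $X^\bullet$ does lie in $\Ddf_{\le n}(\cA)$ and $H^i(X^\bullet) = X^i$. For any $m \le n$, the truncation $\beta_{\ge m+1}(X^\bullet)$ is the bounded complex with $\ell$ ranging over $0,1,\dots,n-m-1$, and a direct Euler-characteristic computation in $K(\Ddf_{[m+1,n]}(\cA)) \cong K(\cA_{[m+1,n]})$ gives
\[
\sum_{\ell=0}^{n-m-1} \bigl([A_{n-\ell}] - [B_{n-\ell}]\bigr) \;=\; \sum_{k=m+1}^{n} u_k \;=\; v_m.
\]
Hence $\psi([X^\bullet]) = (v_m)$ and $\psi$ is bijective.

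The main obstacle is this construction of $X^\bullet$: it amounts to running the ``topological Euler characteristic'' of Theorem~\ref{thm:main}\eqref{it:conv-series} in reverse, realizing an abstract element of a completion as the class of an honest chain complex, and it is precisely where enlarging from $\Db(\cA)$ to $\Ddf(\cA)$ is genuinely needed. Everything else is bookkeeping around the commutative diagrams already set up in Lemmas~\ref{lem:ksum} and~\ref{lem:ksum-fin}.
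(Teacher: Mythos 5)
Your proof is correct and takes essentially the same approach as the paper: both use Lemma~\ref{lem:ksum-fin} to identify $\KH(\Ddf_{\le n}(\cA))/q^s\KH(\Ddf_{\le n}(\cA))$ with $K(\cA_{[n-s+1,n]})$, and both realize an abstract element of the completion as the class of an explicit chain complex with zero differentials whose terms alternate between the ``positive'' and ``negative'' semisimple pure-weight parts of the successive differences. The only distinction is cosmetic — you phrase the argument as surjectivity of an injective map $\psi$ into $\Kh(\cA_{\le n})$, while the paper phrases it as verifying completeness directly against a compatible system $(f_i)$ — and the indexing of $X^\bullet$ is shifted by two degrees, which does not affect the Euler characteristic.
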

\begin{proof}
Since $q^s \cdot \KH(\Ddf_{\le n}(\cA)) = \KH(\Ddf_{\le n-s}(\cA))$ for any $s\in\mZ_{\geq0}$, it follows from Lemma~\ref{lem:ksum-fin}, with $m=n-s$,  that
\[
\KH(\Ddf_{\le n}(\cA)) / q^s \cdot \KH(\Ddf_{\le n}(\cA)) \cong K(\Ddf_{[n-s+1,n]}(\cA)).
\]
Suppose we have a sequence of elements $f_i\in K(\Ddf_{[n-i+1,n]}(\cA))$, $i\in\mZ_{\geq0}$ satisfying the condition that $[\beta_{\ge n-j+1}]f_i = f_j$ when $j < i$.  To show that $\KH(\Ddf_{\le n}(\cA))$ is complete, we must exhibit an element $g\in\KH(\Ddf_{\le n}(\cA))$ such that $[\beta_{\ge n-i+1}]g = f_i$ for all $i$.

By Lemma~\ref{lem:ksum-fin} again, we identify $K(\Ddf_{[n-i+1,n]}(\cA))$ with $K(\cA_{[n-i+1,n]})$, viewed as a subgroup of $K(\cA)$.  Regarding all the $f_i$ as elements of $K(\cA)$, we can form the elements
\[
a_i = f_i - f_{i-1} = f_i - [\beta_{\ge n-i+2}]f_i \in K(\cA_{n-i+1}).
\]
Then $f_i = a_1 + a_2 + \cdots + a_i$ for all $i$.  Since $K(\cA_{n-i+1})$ is the free abelian group on $\Irr(A_{n-i+1})$, we can write
\[
a_i = c_{i1}[L_{i1}] + \cdots + c_{i,r_i}[L_{i,r_i}] - d_{i1}[M_{i1}] - \cdots - d_{i,s_i}[M_{i,s_i}]
\]
for unique (up to renumbering) $[L_{ij}], [M_{ij}] \in \Irr(\cA_{n-i+1})$, and $c_{ij}, d_{ij} > 0$.  Now, let $X^\bullet$ be the chain complex with trivial differentials and
\begin{equation}
X^k=
\begin{cases}
0 &\text{if $k \ge 0$},\\
\bigoplus_{j=1}^{r_i} L_{ij}^{\oplus c_{ij}}& \text{if $k=-2i < 0$ is even},\\
\bigoplus_{j=1}^{s_i} M_{ij}^{\oplus d_{ij}}& \text{if $k=-2i+1 < 0$ is odd}.
\end{cases}
\end{equation}
By construction, $H^k(X^\bullet) \cong X^k$ vanishes for $k \ge 0$, and is pure of weight $n+1 + \lfloor k/2 \rfloor$ for $k < 0$, so $X^\bullet \in \Ddf_{\le n}(\cA)$.  It is easy to see that $[\beta_{\ge n-i+2} X^\bullet] = f_i$, so $g = [X^\bullet]$ is the element we were looking for.

Finally, we also see from Lemma~\ref{lem:ksum-fin} that
$$K(\cA_{\le n})/ q^i \cdot K(\cA_{\le n}) \cong \KH(\Ddf_{\le n}(\cA)) / q^i \cdot \KH(\Ddf_{\le n}(\cA))$$ for each $i$,  so $K(\cA_{\le n})$ and $\KH(\Ddf_{\le n}(\cA))$ have the same completion.
\end{proof}

\begin{proof}[Proof of Theorem~\ref{thm:main}\eqref{it:compl-isom}]
The injectivity of $K(\cA) \to \KH(\Ddf(\cA))$ was established in Lemma~\ref{lem:ksum}.  Since every Cauchy sequence in $K(\cA)$ or $\KH(\Ddf(\cA))$ is contained in some submodule $K(\cA_{\le n})$ or $\KH(\Ddf_{\le n}(\cA))$,  Lemma~\ref{lem:complete} implies that $\KH(\Ddf(\cA))$ is complete, and that $\Kh(\cA) \to \KH(\Ddf(\cA))$ is an isomorphism.
\end{proof}

\section{Projective Resolutions and Derived Functors}
\label{sect:projres}

We will prove the remaining parts of Theorem~\ref{thm:main} in this section.   Henceforth, $\cA$ is assumed to have enough projectives.  Since $\cA$ is also assumed to be a finite-length category, Fitting's lemma and its consequences hold; for instance, each projective is a direct sum of finitely many indecomposable ones.
The degree of an indecomposable projective $P$ is the integer
\[
\deg(P) = \wt(\text{the unique simple quotient of $P$}).
\]
For $n \in \Z$, let $\Proj(\cA)_n$ denote the set of isomorphism classes of indecomposable projectives of degree $n$.  Obviously, the map $P \mapsto P/\rad P$ induces a bijection
\[
\Proj(\cA)_n \simto \Irr(\cA_n).
\]
By considering the weight filtration (Lemma \ref{lem:weightfilt}, see also Example \ref{ex:mixed}), one can see that
\begin{equation}\label{eqn:wtbound}
\deg(P) = n \qquad\text{implies}\qquad P \in \cA_{\le n},
\end{equation}
with the convention $\deg(0)=-\infty$. More generally, the degree of a projective object is simply the maximum of the degrees of its indecomposable summands, and the degree of an arbitrary object is the degree of its projective cover.

\begin{defn}
A bounded-above complex $P^\bullet$ of projectives is said to have \emph{asymptotically decreasing weights} if for each $m \in \Z$, all but finitely many of the terms $P^i$ have degree ${}\le m$.
\end{defn}

\begin{lem}\label{lem:ddf-proj}
The following conditions on an object $X \in \Dm(\cA)$ are equivalent:
\begin{enumerate}
\item $X \in \Ddf_{\le n}(\cA)$.\label{it:ddf}
\item $X$ is quasi-isomorphic to a bounded-above complex of projectives $P^\bullet$ with asymptotically decreasing weights where each term $P^i$ is of degree ${}\le n$.\label{it:fproj}
\end{enumerate}
\end{lem}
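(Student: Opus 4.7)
The plan is to prove the two implications separately; (2)$\Rightarrow$(1) is straightforward, while (1)$\Rightarrow$(2) is the substantive direction.

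For (2)$\Rightarrow$(1), let $P^\bullet$ be as described.  Each $P^i$ has degree ${}\le n$, hence lies in $\cA_{\le n}$ by \eqref{eqn:wtbound}; since $\cA_{\le n}$ is a Serre subcategory, $H^i(X)=H^i(P^\bullet)\in\cA_{\le n}$.  For the finiteness condition defining $\Ddf$, fix $m$: by the asymptotically decreasing weights hypothesis, $P^i\in\cA_{\le m}$, and therefore $\beta_{\ge m+1}P^i=0$, for all but finitely many $i$; exactness of $\beta_{\ge m+1}$ then gives $\beta_{\ge m+1}H^i(X)=H^i(\beta_{\ge m+1}P^\bullet)=0$ for all but finitely many $i$, as required.

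For (1)$\Rightarrow$(2), I would first reduce to a complex with every term in $\cA_{\le n}$: applying the exact functor $\beta_{\le n}$ to any bounded-above representative $X^\bullet$ of $X$ gives a quasi-isomorphic complex (since each $H^i(X)\in\cA_{\le n}$ already) all of whose terms lie in $\cA_{\le n}$.  Then I would construct a \emph{minimal} projective resolution $P^\bullet\to X^\bullet$ by the standard downward induction, at each step taking the projective cover of the appropriate pullback of cycles; existence is guaranteed by Fitting's lemma in our finite-length setting.  Since the projective cover of any object of $\cA_{\le n}$ is again in $\cA_{\le n}$ (apply \eqref{eqn:wtbound} to each indecomposable summand of the cover), every $P^i$ lies in $\cA_{\le n}$ and hence has degree ${}\le n$.

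The heart of the argument is the asymptotically decreasing weights condition, and here minimality is crucial.  Since the differentials of a minimal complex factor through the radical, a standard computation identifies the multiplicity of a simple $L$ in the head $P^i/\rad(P^i)$ with $\dim\Hom_{\Dm(\cA)}(X,L[-i])$.  I would then invoke the hyperext spectral sequence
\[
E_2^{a,b}=\Ext^a_{\cA}(H^{-b}(X),L)\;\Longrightarrow\;\Hom_{\Dm(\cA)}(X,L[a+b])
\]
together with the weight bound \eqref{eqn:mixed-defn}.  Nonvanishing of this multiplicity forces $\Ext^a(H^j(X),L)\ne 0$ for some $a\ge 0$ with $j=a+i$, and \eqref{eqn:mixed-defn} (applied via d\'evissage to a composition factor) requires some composition factor $L''$ of $H^j(X)$ to satisfy $\wt(L'')\ge\wt(L)+a$.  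If $\deg(P^i)>m$, one can take $\wt(L)\ge m+1$; combined with $\wt(L'')\le n$ this yields $a\le n-m-1$ and $H^j(X)$ contains a composition factor of weight ${}\ge m+1$.  The $\Ddf$ hypothesis confines such $j$ to a finite set, and then $i=j-a$ ranges over at most $n-m$ values for each such $j$, so $\{i:\deg(P^i)>m\}$ is finite.

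The main obstacle I expect is making precise the theory of minimal projective resolutions for bounded-above \emph{complexes} (not merely single modules) and the identification of their heads with $\Hom$-groups into shifts of simples in $\Dm(\cA)$; both are standard in finite-length categories with enough projectives, but require some care with conventions and with the bookkeeping in the spectral sequence.
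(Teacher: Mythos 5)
Your proof is correct, and the difficult direction (1)$\Rightarrow$(2) is handled by a genuinely different argument from the paper's.  The paper first treats the case $X\in\cA$ via a minimal projective resolution, using that the projective cover of $L$ appears in $Q^i$ only if $\Ext^{-i}(X,L)\ne 0$, and then handles general $X$ by choosing minimal resolutions $Q_j^\bullet$ of each cohomology object $H^j(X)$, assembling them into a complex of projectives with $P^i=\bigoplus_{k\ge 0}Q_{i+k}^{-k}$, and doing an explicit combinatorial estimate on the degrees $a_i=\max\{d_{i+k}-k\}$.  You instead take a minimal bounded-above complex of projectives for $X$ itself, read off the head $P^i/\rad P^i$ as $\Hom_{\Dm(\cA)}(X,L[-i])$, and then run the hyperext spectral sequence $E_2^{a,b}=\Ext^a(H^{-b}(X),L)\Rightarrow\Hom(X,L[a+b])$ to reduce to the same $\Ext$-vanishing given by \eqref{eqn:mixed-defn2}.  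The weight bookkeeping in both cases is the same in spirit: the upper bound $n$ on weights caps the homological degree $a$ (you get $a\le n-m-1$), and the $\Ddf$ condition confines the relevant cohomological degrees $j$ to a finite set, leaving only finitely many $i=j-a$.  The trade-off is roughly this: the paper's argument only invokes minimal resolutions of single objects (classical), at the cost of asserting without proof the Cartan--Eilenberg-style assembly $P^i=\bigoplus_k Q_{i+k}^{-k}$ and a fiddly $a_i$ estimate; your argument leans on the existence and characterization of minimal complexes of projectives over $\Dm(\cA)$ and on convergence of the hyperext spectral sequence for bounded-above complexes (both standard but requiring care, as you note), and in return gets a cleaner identification of exactly where heads of high weight can occur.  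Both are valid; I would just flag explicitly in a write-up that the spectral sequence converges here because $\Hom(X,L[n])$ is computed from finitely many terms of the resolution, and that the multiplicity statement is up to $\End(L)$ (though only nonvanishing is used).
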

\begin{proof}
In view of~\eqref{eqn:wtbound}, it is obvious that condition~\eqref{it:fproj} implies condition~\eqref{it:ddf}.  For the other implication, we first consider the special case where $X \in \cA$.  Let $d$ be the degree of $X$, and let $Q^\bullet$ be a minimal projective resolution of $X$.  The projective cover of a simple object $L$ occurs as a direct summand of $Q^i$
(for $i \le 0$) if and only if $\Ext^{-i}(X,L) \ne 0$.  This can only happen if $\wt(L) \le d+i$, so $Q^i$ is of degree ${}\le d+i$.  Using~\eqref{eqn:wtbound} again, we see that the complex $Q^\bullet$ satisfies condition~\eqref{it:fproj}.

For general $X$, choose a minimal projective resolution $Q_i^\bullet$ for each cohomology object $H^i(X)$.  Then $X$ is quasi-isomorphic to a complex $P^\bullet$ with terms of the form
\[
P^i = \bigoplus_{k \ge 0} Q_{i+k}^{-k}.
\]
Let $N$ be the largest integer such that $H^N(X) \ne 0$.  (Such an $N$ exists because $X$ is bounded above.) For $i \le N$, let $d_i$ be the degree of $H^i(X)$, or let $d_i = -\infty$ if $H^i(X) = 0$.  Next, let
\[
a_i = \max \left\{d_i, d_{i+1}-1, d_{i+2}-2, \ldots, d_N - (N-i) \right\}.
\]
Note that $P^i$ is of degree ${}\le a_i$.  In particular, each $P^i$ is of degree ${}\le n$.  Next, given $m \in \Z$, there is a $k_0$ such that $d_i \le m$ for all $i \le k_0$.  Let
\[
k = \min \{ i - d_i + m \mid k_0 \le i \le N \}.
\]
We claim that for all $i \le k$, $a_i \le m$.  Indeed, if $i \le k$ and $0 \le j \le N - i$, then
\begin{align*}
d_{i+j} - j &\le d_{i+j} \le m &&\text{if $i+j \le k_0$,} \\
d_{i+j} - j &= i+m - (i+j - d_{i+j} + m) \le i+m -k \le m &&\text{if $k_0 \le i+j \le N$.}
\end{align*}
Hence $P^\bullet$ is a bounded above complex of projectives with asymptotically decreasing weights, so it satisfies condition~\eqref{it:fproj}, as desired.
\end{proof}

\begin{lem}\label{lem:cont}
Let $F: \cA \to \cB$ be a right-exact functor commuting with the respective Tate twist.  If $F$ has weight amplitude $\alpha < \infty$, then the left-derived functor $\cL F: \Dm(\cA) \to \Dm(\cB)$ has the property that
\begin{equation}\label{eqn:lf-cont}
\cL F (\Ddf_{\le n}(\cA)) \subset \Ddf_{\le n+\alpha}(\cB).
\end{equation}
\end{lem}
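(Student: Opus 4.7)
The plan is to represent $X$ by a projective resolution of the form guaranteed by Lemma~\ref{lem:ddf-proj}, compute $\cL F(X)$ directly on that resolution, and then transport the weight bounds across $F$ using the weight amplitude hypothesis and the fact that $\cB_{\le k}$ is a Serre subcategory.

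More precisely, given $X \in \Ddf_{\le n}(\cA)$, Lemma~\ref{lem:ddf-proj} produces a bounded-above complex of projectives $P^\bullet$ quasi-isomorphic to $X$, in which every term $P^i$ has degree ${}\le n$ and for each $m$ all but finitely many $P^i$ have degree ${}\le m$.  Because $P^\bullet$ is a bounded-above complex of projectives, it may be used to compute the left-derived functor, so $\cL F(X) \cong F(P^\bullet)$ in $\Dm(\cB)$.  The task is then to verify the two defining conditions of $\Ddf_{\le n+\alpha}(\cB)$ for the complex $F(P^\bullet)$.

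For the weight bound, observe that every $P^i$ lies in $\cA_{\le n}$ by~\eqref{eqn:wtbound}, hence $F(P^i) \in \cB_{\le n+\alpha}$ by the definition of weight amplitude.  Since $\cB_{\le n+\alpha}$ is a Serre subcategory of $\cB$, every subquotient of $F(P^i)$ lies in $\cB_{\le n+\alpha}$; in particular this applies to $H^i(F(P^\bullet))$, which is a subquotient of $F(P^i)$.  For the finiteness condition, fix $m \in \Z$ and choose $i_0$ such that $\deg(P^i) \le m - \alpha$ for all $i \le i_0$, which is possible by the asymptotically decreasing weights property.  Then $F(P^i) \in \cB_{\le m}$ for $i \le i_0$, so $H^i(F(P^\bullet)) \in \cB_{\le m}$ for $i \le i_0$, and the requirement that only finitely many cohomology objects have composition factors of weight ${}>m$ is satisfied.

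There is no real obstacle: the only point requiring attention is the standard fact that bounded-above complexes of projectives compute left-derived functors even when unbounded below, and the observation that $\cB_{\le k}$ being Serre lets one pass from weight bounds on the $F(P^i)$ to weight bounds on cohomology.  The asymptotic finiteness built into the definition of $\Ddf$ is precisely tailored so that the index $i_0$ exists, which is why Lemma~\ref{lem:ddf-proj} was formulated in terms of asymptotically decreasing weights.
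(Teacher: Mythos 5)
Your proof is correct and follows essentially the same route as the paper: resolve $X$ by a bounded-above projective complex with asymptotically decreasing weights via Lemma~\ref{lem:ddf-proj}, compute $\cL F$ on that complex, and transfer the degree bounds term-by-term using the weight amplitude hypothesis. You merely spell out two points the paper leaves implicit --- that $\cB_{\le k}$ is a Serre subcategory, so cohomology inherits the weight bounds, and the explicit choice of $i_0$ with $\deg(P^i)\le m-\alpha$ for $i\le i_0$ --- which is harmless added detail, not a change of method.
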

\begin{proof}
Given $X \in \Ddf_{\le n}(\cA)$, choose a projective resolution $P^\bullet$ satisfying the condition in Lemma~\ref{lem:ddf-proj}\eqref{it:fproj}.  It is clear from~\eqref{eqn:wtbound} that all terms of the complex $F(P^\bullet)$ have weights${}\le n+\alpha$, and that for any $m$, only finitely terms have a composition factor of weight${}>m$.  The same then holds for its cohomology objects, so $\cL F(X) \in \Ddf_{\le n+\alpha}(\cB)$.
\end{proof}
We finish now this section by proving the remaining parts of Theorem~\ref{thm:main}.
\begin{proof}[Proof of Theorem~\ref{thm:main}\eqref{it:conv-series}] Given $m \in \Z$, let $k$ be such that for all $i \le k$, the cohomology $H^i(X)$ has weights ${}\le m$.
Then $[X] = [\tau^{\le k}X] + [\tau^{\ge k+1}X]$ where $\tau^{\le k}$, $\tau^{\geq k}$ denote the usual truncation functors in triangulated categories.
Since $X$ is bounded above, $\tau^{\ge k+1}X$ has only finitely many nonzero cohomology objects, and it is clear that $[\tau_{\ge k+1}X] = \sum_{i=k+1}^\infty (-1)^i[H^i(X)]$.  Moreover, by construction, $\tau^{\le k}X \in \Ddf_{\le m}(\cA)$, so
\[
[X] - \sum_{i=k+1}^\infty (-1)^i[H^i(X)] \in \KH(\Ddf_{\le m}(\cA)).
\]
Thus, the series $\sum (-1)^i [H^i(X)]$ converges to $[X]$.  The argument for $\sum (-1)^i[P^i]$ is similar.
\end{proof}

\begin{proof}[Proof of Theorem~\ref{thm:main}\eqref{it:basis}]
A description of the completion of a free module can be found in~\cite[\S 2.4]{sim:shpcm}.  It follows from that description that the basis of a free $\oR$-module spans a dense free $\coR$-submodule of its completion, and that the two coincide if the basis is finite.  Thus, it follows from~\eqref{eqn:groth-rfree} that $\Irr(\cA_0)$ spans a dense free $\coR$-submodule of $\Kh(\cA_{\le 0})$.

The case of $\Proj(\cA)_0$ is somewhat different, since this set does \emph{not} give an $\oR$-basis for $K(\cA_{\le 0})$ in general.  However, recall that if $P \in \Proj(\cA)_0$ and if $L \in \Irr(\cA_0)$ is its unique irreducible quotient, then in $K(\cA_{\le 0})$, we have
\[
[P] = [L] + (\text{terms in $q \cdot K(\cA_{\le 0})$}).
\]
It is easy to deduce from this that the elements of $\Proj(\cA)_0$ are linearly independent in $K(\cA_{\le 0})$: any relation would give rise to a relation among elements of $\Irr(\cA_0)$.  Thus, the $\oR$-submodule $\Kpf(\cA_{\le 0}) \subset K(\cA_{\le 0})$ generated by $\Proj(\cA)_0$ is free.  It follows that the corresponding $\fR$-submodule $\Kpf(\cA) \subset K(\cA)$ is free as well.  Since completion is left-exact, we have a natural inclusion $\Khpf(\cA) \subset \Kh(\cA)$.  The argument of the previous paragraph shows that $\Proj(\cA)_0$ spans a free dense submodule of $\Khpf(\cA)$, so it remains only to show that this submodule is also dense in $\Kh(\cA)$.  But this follows from the fact that the class of every object in $\Ddf(\cA)$ can be written as a convergent series of projectives.
\end{proof}

\begin{proof}[Proof of Theorem~\ref{thm:main}\eqref{it:der-functor}]
We see from Lemma~\ref{lem:cont} that $\cL F(\Ddf(\cA)) \subset \Ddf(\cB)$, so we certainly have an induced map $[\cL F]: K(\Ddf(\cA)) \to K(\Ddf(\cB))$.  Moreover, if $f \in K(\Ddf(\cA))$ is such that $[\beta_{\ge m}]f = 0$, it follows from Lemmas~\ref{lem:ksum} and~\ref{lem:cont} that $[\beta_{\ge m+\alpha}][\cL F]f = 0$.  In particular, if $f \in I(\cA)$, then $[\cL F]f \in I(\cB)$, so we actually have an induced map $[\cL F]: \KH(\Ddf(\cA)) \to \KH(\Ddf(\cB))$.  The assertion that it is continuous is then just a restatement of~\eqref{eqn:lf-cont}.
\end{proof}

\section{Examples and Applications}
\label{sect:example}

\subsection{Graded modules over a graded local ring}
Let $k$ be a field and $\mH=\bigoplus_{i\in \mZ_{\geq 0}}\mH^i$ a finite-dimensional positively graded connected (i.e., $\mH^0=k$) $k$-algebra. Then $\mH$ is graded local with maximal ideal $\mathfrak{m}=\bigoplus_{i\in \mZ_{>0}}\mH^i$ and has, up to isomorphism and grading shift, a unique irreducible (finite-dimensional) graded $\mH$-module, namely the trivial module $L=k=\mH^0$.
Let $\mH\mgmod$ be the category of finite dimensional $\mZ$-graded $\mH$-modules with grading shift functor $\langle j\rangle$ defined as $(M\langle j\rangle)_i=M_{i-j}$ for $M=\bigoplus_{i\in\mZ} M_i\in\mH\mgmod$.
\begin{prop}
\begin{itemize}
\item   $\mH\mgmod$ with $\wt(L\langle i\rangle)=-i$ and $(1)=\langle 1\rangle$ is a noetherian and artinian mixed abelian category with Tate twist.
\item Let $p(q)=\sum_{i \ge 0} (\dim \mH^i) q^i$ be the Poincar\'e polynomial of $\mH$. It has nontrivial constant term, so it can be inverted in the ring $\cR$. In fact, $[L]=p(q)^{-1}[\mH]$ in $\Kh(\cA)$, and each of $[L]$ and $[\mH]$ gives an $\cR$-basis for $\Kh(\cA)$ (which is a free $\cR$-module of rank $1$).
\end{itemize}
\end{prop}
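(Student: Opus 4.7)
The plan is to verify the two bullets separately, applying Theorem~\ref{thm:main}\eqref{it:basis} for the statements about bases.

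The first bullet is essentially a special case of Example~\ref{ex:mixed}, with $\mH$ in place of $A$ and $k$ in place of $\mC$, strengthened by connectedness (which is even stronger than semisimplicity of $\mH^0$). I would just record the essentials: every object has finite $k$-dimension, so the category is both noetherian and artinian; because $\mH$ is graded local with residue field $k$, every simple is a shift $L\langle i\rangle$, with $\wt(L\langle i\rangle) = -i$. For mixedness, I would fix an extension $0 \to L\langle i\rangle \to M \to L\langle j\rangle \to 0$ and note that since $L\langle i\rangle$ and $L\langle j\rangle$ are concentrated in degrees $i$ and $j$, a nonsplit extension requires a nonzero piece $\mathfrak{m}_{i-j} \otimes M_j \to M_i$ of the $\mH$-action, forcing $i > j$. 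Equivalently, $\Ext^1(L\langle j\rangle, L\langle i\rangle) \neq 0$ implies $\wt(L\langle i\rangle) < \wt(L\langle j\rangle)$, which is the mixedness condition in contrapositive form. That $\langle 1\rangle$ is a Tate twist is immediate.

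For the second bullet, I would identify the weight filtration of $\mH$ with its $\mathfrak{m}$-adic filtration: each subquotient $\mathfrak{m}^i/\mathfrak{m}^{i+1}$ is annihilated by $\mathfrak{m}$ and concentrated in degree $i$, hence semisimple and pure of weight $-i$, with exactly $\dim \mH^i$ copies of $L\langle i\rangle$. Since the Tate twist acts by $q$ on $K(\cA)$, summing these contributions yields
\[
[\mH] \;=\; \sum_{i \geq 0} (\dim \mH^i)\, q^i\, [L] \;=\; p(q)\,[L]
\]
in $K(\cA) \subset \Kh(\cA)$. Connectedness gives $p(0) = \dim \mH^0 = 1$, so $p(q) \in 1 + q\coR$ is a unit in $\coR$ and hence in $\cR$; inverting yields $[L] = p(q)^{-1}[\mH]$ in $\Kh(\cA)$.

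For the basis assertions I would appeal directly to Theorem~\ref{thm:main}\eqref{it:basis}. The set $\Irr(\cA_0)$ reduces to $\{[L]\}$ (since among the simples only $L = L\langle 0\rangle$ has weight $0$), and $\Proj(\cA)_0$ reduces to $\{[\mH]\}$ (since $\mH$ itself is the projective cover of $L$, and thus has degree $0$). Both being finite singletons, each gives an $\cR$-basis of $\Kh(\cA)$, which is therefore free of rank one; the equation $[L] = p(q)^{-1}[\mH]$ is the change-of-basis relation. No single step is hard here; the one point that requires a moment's attention is confirming that the $\mathfrak{m}$-adic filtration really is the weight filtration, but the direct pointwise check above settles it.
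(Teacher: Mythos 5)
Your proposal matches the paper's approach and correctly supplies the details the paper leaves implicit (the paper simply cites Example~\ref{ex:mixed} for the first bullet and Theorem~\ref{thm:main}\eqref{it:basis} for the basis claims, leaving the change of basis as a one-line remark). The verification of mixedness via the grading, the observation that $p(q)$ has constant term $1$ and is therefore a unit in $\cR$, and the reduction of $\Irr(\cA_0)$ and $\Proj(\cA)_0$ to singletons are all correct and are precisely what the paper intends.

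One small but genuine slip: the weight filtration of $\mH$ is \emph{not} the $\mathfrak{m}$-adic filtration in general, but rather the degree truncation filtration $W_{-j}\mH = \bigoplus_{k\ge j}\mH^k$. The containment $\mathfrak{m}^j \subset \bigoplus_{k\ge j}\mH^k$ is always true, but equality (and hence your claim that $\mathfrak{m}^j/\mathfrak{m}^{j+1}$ is concentrated in degree $j$ with $\dim\mH^j$ copies of $L\langle j\rangle$) holds only when $\mH$ is generated in degree one. For instance, if $\mH = k[x,y]/(x^2,xy,y^2)$ with $\deg x=1$, $\deg y=2$, then $\mathfrak{m}/\mathfrak{m}^2$ sits in degrees $1$ and $2$, so it is not pure of a single weight. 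This does not affect your conclusion: the identity $[\mH] = \sum_{j\ge 0}(\dim\mH^j)\,q^j\,[L] = p(q)[L]$ follows directly from counting composition factors degree by degree (equivalently, from the truncation filtration), without any need to invoke the $\mathfrak{m}$-adic filtration. Replacing that sentence with the truncation filtration makes the argument correct as stated.
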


\begin{proof} The first statement is just Example \ref{ex:mixed}. By Theorem~\ref{thm:main}\eqref{it:basis}, each of $[L]$ and $[\mH]$ gives an $\cR$-basis, since $\mH$ is local, hence has up to isomorphism and grading shift a unique simple module. The formula $[L]=p(q)^{-1}[\mH]$ follows then just by a basis transformation.
\end{proof}
A natural example arising in this context is the cohomology ring $\mH=H^*(X)$ of a smooth projective complex algebraic variety $X$. If we choose for instance $X=\mathbb{CP}^1$ then $\mH=H^*(X)=\mC[x]/(x^2)$ with Poincare polynomial $p(q)=1+q^2$, and we obtain the equation $[L]=\frac{1}{1+q^2}[\mH]=(1-q^2+q^4-q^6+\ldots)[\mH]$ in $\Kh(\cA)$.

More generally, if $\mH=H^*(X)$, where $X=\op{Gr}(i,n)$ is the Grassmannian variety of complex $i$-planes in $\mathbb{C}^n$, or any partial flag variety $X=\op{GL}(n,\mC)/P$ for some parabolic subgroup $P$, then the complex cohomology rings $H^*(X)$ are explicitly  known (see for instance \cite{Fu}, \cite{Hiller}). We have the equality $[\mH]=\binom{n}{d_1, \ldots, d_r}[L]$ in the Grothendieck group of graded $\mH$-modules, where
$$\binom{n}{d_1, \ldots, d_r} = \frac{[n]!}{[d_1] ! [d_2] ! \cdots [d_r] ! [(n-d_1
- \cdots - d_r)]!}$$
denotes the {\it quantum binomial coefficient} defined by taking the quantum numbers $[n]=\frac{q^{2n}-1}{q^2-1}=1+q^2+\cdots+q^{2(n-1)}$ for $n\in\mZ_{> 0}$ and their factorials $[n]!=[1][2][3]\cdots [n]$ with $[0]!=1$. Interpreting this quantum binomial coefficient as a formal power series in $q$, we obtain the equation
$$[L]=\frac{1}{\binom{n}{d_1, \ldots, d_r}}[\mH]$$ in $\Kh(\cA)$. By Theorem \ref{thm:main}, $L$ and $[\mH]$ each form an $\cR$-basis of $\Kh(\cA)$, and the transformation matrix is given by quantum binomial coefficients and their inverses. This transformation matrix also occurs in the representation theory of the smallest quantum group $U_q(\mathfrak{sl}_2)$, as we will see in the next section.

\subsection{Categorification of finite-dimensional irreducible modules for quantum $\mathfrak{sl}_2$}
Let $\mC(q)$ be the field of rational functions in an indeterminate $q$. Let $\mathcal{U}_q=\mathcal{U}_q(\mathfrak{sl}_2) $ be the associative algebra over $\mathbb{C}(q)$ generated by $ E, F, K, K^{-1} $ subject to the
relations:
\begin{eqnarray*}
&KK^{-1} = K^{-1}K=1, \quad KE = q^2 EK, \quad KF = q^{-2} FK,& \\
&EF-FE = \frac{K-K^{-1}}{q-q^{-1}}.&
\end{eqnarray*}

Let $ \bar{V}_n $ be the unique (up to isomorphism) irreducible module for $\mathfrak{sl}_2$ of dimension $ n+1$. Denote by $V_n $ its quantum analogue (of type I), that is
the irreducible $ \mathcal{U}_q(\mathfrak{sl}_2)$-module with basis $ \lbrace v_0, v_1, \ldots, v_{n} \rbrace $ such that
\begin{equation}
\label{irreddef}
K^{\pm 1} v_i= q^{\pm (2i-n)} v_i\quad\quad Ev_i =[i+1] q^{-i-1}v_{i+1}\quad\quad F v_i = [n-i+1] q^{1-i}v_{i-1}.
\end{equation}
Note that it is defined over $\fR$. The chosen basis is the canonical basis in Lusztig's theory of canonical bases (\cite{L}, \cite{FK}) and pairs via a bilinear form with the {\it dual canonical basis} given by $v^i=q^{-i(n-i)}\frac{1}{\binom{n}{i, n-i}}v_i$. Hence, passing to the completion $\hat{V}_n$ of $V_n$ we have an isomorphism of $\cR$-modules
\begin{eqnarray}
\hat{V}_n&\mapsto& \displaystyle\bigoplus_{i=0}^n \Kh(\cA_i)\label{eqn:iso}\\
v_i&\mapsto& [H^*(\op{Gr}(i,n))\langle i(n-i)\rangle]\nonumber\\
v^i&\mapsto& [L_i]\nonumber
\end{eqnarray}
where $\cA_i$ denotes the mixed abelian category $H^*(\op{Gr}(i,n))$-$\gmod$ with unique simple object $L_i$ of weight zero. The action of the quantum group can then be realized via correspondences: if we let $\op{Gr}(i,i+1,n)$ be the variety of partial flags
\[
(\text{$i$-plane}) \subset (\text{$(i+1)$-plane}) \subset \mC^n,
\]
then $H^*(\op{Gr}(i,i+1,n))$ is naturally a $\left(H^*(\op{Gr}(i,n)), H^*(\op{Gr}(i+1,n)\right)$-bimodule or a $\left(H^*(\op{Gr}(i+1,n)\right), H^*(\op{Gr}(i,n))$-bimodule. Tensoring (with appropriate grading shifts) with these bimodules defines exact endofunctors on $\oplus_{i=0}^n (\cA_i)$ which induce the action of $E$ and $F$ on $\oplus_{i=0}^n \Kh(\cA_i)$ given by the formula \eqref{irreddef} via the isomorphism \eqref{eqn:iso}. For details see \cite[Section 6]{FKS} and \cite{CR} in the non-graded version.
\subsection{Quotient categories}
Let $k$ be an algebraically closed field, and let $A=\bigoplus_{i\in\mZ_{\geq 0}}A_i$ be a finite-dimensional positively graded $k$-algebra, semisimple in degree zero. Let $A=\oplus_{i=1}^r Ae_i$ be the decomposition into indecomposable projective modules with simple quotients $L_i$, $1\leq i\leq r$. Let $\cA= A\mgmod$ be the mixed category of finite-dimensional graded right $A$-modules with Tate twist $(1)=\langle 1\rangle$. Assume we are given a Serre subcategory $\cS_I$ of $\cA$ stable under Tate twist. That is, $\cS_I$ is a full subcategory consisting of all modules which have composition factors only of the form $L_i\langle j\rangle$, where $j\in\mZ$ and $i\in I$ for some fixed subset $I$ of $\{1,\ldots, r\}$. Let
$$Q:\cA\rightarrow \cA/\cS_I$$
be the quotient functor to the Serre quotient $\cA/\cS_I$. Under the identification of $\cA/\cS_I$ with graded modules over $\END_A(P_I)$, where $P_I=\bigoplus_{i\notin I}Ae_i$ we have $Q=\HOM_A(\bigoplus_{i\in I}Ae_i,-)$. In particular, $Q$ is exact and has left adjoint $Q':M\mapsto M\otimes_{\END_A(P_I)}P_I$. Now the following is just a direct application of our main result:

\begin{prop}
The functors $Q:\cA\rightarrow \cA/\cS_I$ and $Q':\cA/\cS_I\rightarrow \cA$ are exact and right exact respectively, commute with Tate twist and have finite weight amplitude. Hence the functors $Q$ and $\cL Q'$
induce continuous homomorphisms of $\cR$-modules\label{prop:der-functor}
\[
[Q]: \Kh(\cA) \to \Kh(\cA/\cS_I),\quad [\cL Q']: \Kh(\cA/\cS_I) \to \Kh(\cA).
\]
\end{prop}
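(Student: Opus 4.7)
The plan is to verify each of the hypotheses of Theorem~\ref{thm:main}\eqref{it:der-functor} separately for $Q$ and for $Q'$, and then to apply that theorem directly. Exactness of $Q$ is immediate from the identification $Q \cong \HOM_A(P_I,-)$, since $P_I$ is a projective $A$-module; right-exactness of $Q'$ is automatic because $Q'$ is a left adjoint (equivalently, a tensor product functor). Commutation with the Tate twist $\langle 1\rangle$ is formal in both cases: $P_I$ is a graded $(\END_A(P_I),A)$-bimodule, and both $\HOM_A(P_I,-)$ and $-\otimes_{\END_A(P_I)} P_I$ are additive functors described by a graded bimodule, so they commute with grading shifts.

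The substantive step is finite weight amplitude. For $Q$, one checks on simple objects: $Q$ kills $L_i\langle k\rangle$ when $i\in I$, and sends $L_i\langle k\rangle$ to the simple $\bar L_i\langle k\rangle$ of the same weight $-k$ when $i\notin I$; since $Q$ is exact and respects the weight filtration term-by-term, we conclude $Q(\cA_{\le n})\subset(\cA/\cS_I)_{\le n}$, so the weight amplitude is $0$. For $Q'$, right-exactness reduces the problem, via induction on length in $\cA/\cS_I$, to bounding the weights of $Q'(\bar L_i\langle k\rangle)$. The projective cover $B\bar e_i$ of $\bar L_i$ in $\cA/\cS_I$ satisfies $Q'(B\bar e_i)\cong e_i A$; because $A$ is finite-dimensional and concentrated in non-negative degrees, $e_i A$ has all composition factors in non-negative degrees, i.e.\ of weight $\le 0$. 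Its quotient $Q'(\bar L_i)$ therefore has weights $\le 0$, and incorporating the shift by $\langle k\rangle$ shows $Q'(\bar L_i\langle k\rangle)$ has weights $\le -k$. Thus $Q'(\cB_{\le n})\subset\cA_{\le n}$, and the weight amplitude of $Q'$ is $0$ as well.

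With all hypotheses in place, Theorem~\ref{thm:main}\eqref{it:der-functor} immediately supplies continuous $\cR$-module homomorphisms $[\cL Q]:\Kh(\cA)\to\Kh(\cA/\cS_I)$ and $[\cL Q']:\Kh(\cA/\cS_I)\to\Kh(\cA)$; since $Q$ is exact we have $\cL Q\simeq Q$ and may write $[Q]$. The main conceptual obstacle is simply to keep the two opposing sign conventions (grading degree versus weight) coherent when tracking where the composition factors of $e_iA$ live; everything else is a routine translation between the module-theoretic description of the Serre quotient and the weight formalism developed in Sections~\ref{sect:notation}--\ref{sect:projres}.
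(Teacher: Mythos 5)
Your proof is correct and takes the same route as the paper, namely reducing to Theorem~\ref{thm:main}\eqref{it:der-functor}; the paper itself merely remarks that the proposition is ``just a direct application of our main result.'' Your verification of the hypotheses---exactness of $Q=\HOM_A(P_I,-)$, right-exactness of the left adjoint $Q'$, compatibility with the Tate twist, and weight amplitude $0$ for both (the key point being $Q'(\bar e_iB)\cong e_iA$ together with the fact that $A$ is concentrated in non-negative degrees, so $e_iA$ has weights $\le 0$)---supplies exactly the details the paper leaves to the reader.
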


Note that $(\cL Q'\circ Q)^2\cong \cL Q'\circ Q$, since $Q\circ \cL Q'\cong\op{id}$. In \cite{FSS} this property is used to categorify the Jones--Wenzl projectors $\hat{V}_i\otimes \hat{V}_j\rightarrow \hat{V}_k$ for any summand $V_k$ of $V_i\otimes V_j$.

\end{document}